\newcommand{\nmax}{145}
\newcommand{\dash}[1]{--~#1~--}
\newcommand{\RN}{{\,RN\,}}
\newcommand{\RNe}{\,RN_e\,}
\newcommand{\RNd}{\,RN_d\,}
\newcommand{\RU}{{\,RU\,}}
\newcommand{\RD}{{\,RD\,}}
\newcommand{\RZ}{{\,RZ\,}}
\newcommand{\FastMult}{{\,Fast2Mult\,}}
\newcommand{\FastSum}{{\,Fast2Sum\,}}
\newcommand{\DblMult}{\,\mbox{\it DblMult}\,}
\newtheorem{theorem}{Theorem}
\newtheorem{algorithm}{Algorithm}
\newcommand{\ulp}{\mbox{\rm ulp}}
\newcommand{\hulp}{{\frac12}\mbox{\rm ulp}}
\newcommand{\mb}[1]{\makebox(0,0){\rule[-1ex]{0ex}{3ex}#1}}
\newenvironment{proof}%
{\begin{trivlist}\item[]{\bf Proof:}\nopagebreak[4]}%
{\mbox{}\hfill$\Box$\end{trivlist}}
\title{Computing Integer Powers in\\Floating-Point Arithmetic}
\author{\small Peter Kornerup \and \small Vincent Lef\`evre \and  \small Jean-Michel Muller
\thanks{Peter~Kornerup is with SDU, Odense, Denmark;
Vincent Lef\`evre and Jean-Michel~Muller are with
  Laboratoire LIP, CNRS/ENS Lyon/INRIA/Univ. Lyon 1, Lyon, France.
  }}
\date{{\small May 2007\\
This is LIP Research Report number RR2007-23\\
Ceci est le Rapport de Recherches num\'ero RR2007-23 du LIP\\
Laboratoire LIP, CNRS/ENS Lyon/INRIA/Univ. Lyon 1, Lyon, France.}}
\begin{document}
\maketitle
\sloppy

\begin{abstract}
We introduce two algorithms for accurately evaluating powers to a
positive integer in floating-point arithmetic, assuming a
\emph{fused multiply-add} (fma) instruction is available. We show
that our log-time algorithm always produce faithfully-rounded
results, discuss the possibility of getting correctly rounded
results, and show that results correctly rounded in double precision
can be obtained if extended-precision is available with the
possibility to round into double precision (with a single rounding).
\end{abstract}

\section{Introduction}

We deal with the implementation of the integer power function in
floating-point arithmetic.  In the following, we assume a radix-$2$
floating-point arithmetic that follows the IEEE-754 standard for
floating-point arithmetic. We also assume that a fused multiply-and-add
(fma) operation is available, and that the input as well as the output
values of the power function are not subnormal numbers, and are below the
overflow threshold (so that we can focus on the powering of the
significands only).

An important case dealt with in the paper will be the case when an
internal format, wider than the target format, is available. For
instance, to guarantee \dash{in some cases} correctly rounded integer
powers in double precision arithmetic, we will have to assume that a
double-extended precision is available. The examples will consider
that it has a 64-bit precision, which is the minimum required by the
IEEE-754 standard.

The IEEE-754 standard~\cite{IEEE85} for radix-2 floating-point
arithmetic (and its follower, the IEEE-854 radix-independent
standard~\cite{IEEE87}) require that the four arithmetic operations
and the square root should be correctly rounded. In a floating-point
system that follows the standard, the user can choose an
\emph{active rounding mode} from:
\begin{itemize}
                \item rounding towards $-\infty$: $\RD (x)$ is the largest
                machine number less than or equal to $x$;

                \item rounding towards $+\infty$: $\RU (x)$ is the smallest
                machine number greater than or equal to $x$;

                \item rounding towards $0$: $\RZ (x)$ is equal to
                $\RD (x)$ if $x \geq 0$, and to $\RU (x)$ if $x < 0$;

              \item rounding to nearest: $\RN (x)$ is the machine number
                that is the closest to $x$ (with a special convention if
                $x$ is exactly between two machine numbers: the chosen
                number is the ``even'' one, i.e., the one whose last
                significand bit is a zero).
\end{itemize}

When $a \circ{} b$ is computed, where $a$ and $b$ are floating-point
numbers and $\circ{}$ is $+$, $-$, $\times$ or $\div$, the returned
result is what we would get if we computed $a \circ{} b$ exactly,
with ``infinite'' precision and rounded it according to the active
rounding mode. The default rounding mode is round-to-nearest. This
requirement is called \emph{correct rounding}. Among its many
interesting properties, one can cite the following result (the first
ideas that underlie it go back to M\o{}ller~\cite{Moller1965}).

\begin{theorem}[Fast2Sum algorithm](Theorem C of~\cite{Knu98},
page 236). Assume the radix~$r$ of the floating-point system being
considered is less than or equal to~$3$, and that the used
arithmetic provides correct rounding with rounding to nearest.
Let $a$ and $b$ be floating-point numbers, and assume that the
exponent of $a$ is larger than or equal to that of $b$. The
following algorithm computes two floating-point numbers $s$ and $t$
that satisfy:
\begin{itemize}
  \item $s+t = a+b$ exactly;
  \item $s$ is the floating-point number that is closest to $a+b$.
\end{itemize}
\end{theorem}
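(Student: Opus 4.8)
The plan is to first recall the three operations that make up the algorithm, namely $s=\RN(a+b)$, $z=\RN(s-a)$, and $t=\RN(b-z)$, and then to observe that the second assertion (that $s$ is the floating-point number closest to $a+b$) is nothing but the definition of the first line. Hence the whole difficulty lies in the exact identity $s+t=a+b$. Writing $\eta=(a+b)-s$ for the rounding error of the first addition, I note that $|\eta|\le\hulp(s)$ and that $\eta$ is itself exactly representable: this is the standard fact that the error of a correctly-rounded sum is a floating-point number, because $a+b$ and $s$ are both integer multiples of the ulp of the operand of smaller exponent (in the only nontrivial case), while $|\eta|$ is small enough to fit in $p$ digits. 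Granting this, the identity $s+t=a+b$ reduces to two claims: that $s-a$ is computed \emph{exactly} (so that $z=s-a$), and that consequently $b-z=(a+b)-s=\eta$ is also exact. The second is immediate once $z=s-a$, since then $b-z=\eta$ is the representable error, whence $t=\eta$ and $s+t=a+b$.

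So the heart of the matter is the lemma that \emph{$s-a$ is exactly representable}. I would prove this by a scaling-and-exponent argument. After scaling by a power of the radix one may assume the operand of smaller exponent has exponent $p-1$, so that $a$, $b$, and $a+b$ are all integers. If $|a+b|<r^p$ then $a+b$ is itself a representable integer, $s=a+b$, $\eta=0$, and everything is trivial; so assume $|a+b|\ge r^p$. Two facts are then needed: (i) $s-a$ is an integer multiple of $\ulp(a)$ — which follows from $e_s\le e_a+1$ together with $e_s\ge e_a$ in the genuine-addition case, so that $s$ and $a$ lie on the same grid — and (ii) a magnitude bound showing this multiple has at most $p$ significant digits.

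The main obstacle, and exactly where the hypothesis $r\le 3$ is indispensable, is step (ii). Writing $s-a=b-\eta$ and using $|b|\le r^{e_b+1}-\ulp(b)$ together with $|\eta|\le\hulp(s)\le\frac r2\ulp(a)$, I would divide through by $\ulp(a)$ and find that $N:=(s-a)/\ulp(a)$ satisfies, in the worst configuration $e_a=e_b$ and $e_s=e_a+1$, the bound $N\le r^p-1+\frac r2$. This is strictly below $r^p+1$ precisely when $r\le 3$, which forces $N\le r^p$ and hence $s-a\le r^{e_a+1}$, a value on the representable grid; for $r\ge 4$ one instead reaches $N=r^p+1$, i.e.\ $s-a=r^{e_a+1}+\ulp(a)$, which is not representable, and the theorem genuinely fails. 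The remaining configurations — $e_b<e_a$, where $b$ is smaller and the same bound is comfortable, and the subtractive case $ab<0$, where cancellation makes $e_s\le e_a$ and either Sterbenz-type reasoning yields $s=a+b$ outright or the identical ulp/magnitude bookkeeping applies — I would dispatch as easier sub-cases. Collecting everything gives $z=s-a$ exactly, $t=\eta$ exactly, and therefore $s+t=a+b$, while $s=\RN(a+b)$ is the second conclusion by construction.
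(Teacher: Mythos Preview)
The paper does not prove this theorem: it is stated as ``Theorem~C of~\cite{Knu98}, page~236'' and then used as a black box, with no argument given in the paper itself. There is therefore nothing in-paper to compare your attempt against.

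For what it is worth, your sketch follows the standard line of attack and is essentially correct. You rightly reduce the identity $s+t=a+b$ to the exactness of $z=s-a$, and you correctly locate the role of the hypothesis $r\le 3$ in the bound $|s-a|/\ulp(a)\le r^{p}-1+r/2$, which, being an integer, is forced to be at most $r^{p}$ precisely when $r\le 3$. Two spots would want an extra line in a full write-up: the claim $e_s\le e_a+1$ (true, but it relies on $a$ and $b$ being floating-point numbers, so that $|a|+|b|\le 2(r^{e_a+1}-\ulp(a))$ and no round-up to exponent $e_a+2$ can occur), and the subtractive branch $ab<0$, where once $e_s<e_a$ the relevant grid for $s-a$ is $\ulp(s)$ rather than $\ulp(a)$ and the magnitude estimate must be redone on that grid. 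Both are routine, and your overall structure matches the classical proof the paper defers to.
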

\begin{algorithm}[Fast2Sum(a,b)]~\\[-2ex]
 \[
 \begin{array}{lll}
  s &:=& \RN(a + b);\\
  z &:=& \RN(s - a);\\
  t &:=& \RN(b - z);
 \end{array}
 \]
\end{algorithm}

If no information on the relative orders of magnitude of $a$ and $b$
is available, there is an alternative algorithm introduced by
Knuth~\cite{Knu98}. It requires $6$ operations instead of $3$ for
the Fast2Sum algorithm, but on any modern computer, the $3$
additional operations cost significantly less than a comparison
followed by a branching.


Some processors (e.g., the IBM PowerPC or the Intel/HP
Itanium~\cite{CorHarTan2002}) have a \emph{fused multiply-add} (fma)
instruction that allows to compute $ax\pm b$, where $a$, $x$ and $b$
are floating-point numbers, with one final rounding only. This
instruction allows one to design convenient software algorithms for
correctly rounded division and square root. It also has the
following interesting property. From two input floating-point
numbers $a$ and $b$, the following algorithm computes $c$ and $d$
such that $c + d = ab$, and $c$ is the floating-point number that is
nearest $ab$. \newpage
\begin{algorithm}[Fast2Mult(a,b)]~\\[-2ex]
 \[
 \begin{array}{lll}
     c &:=& \RN(ab); \\
     d &:=& \RN(ab - c);
 \end{array}
 \]
\end{algorithm}
Performing a similar calculation without a fused multiply-add
operation is possible~\cite{Dekker71} but requires $17$
floating-point operations instead of $2$.

Algorithms Fast2Sum and Fast2Mult both provide double-precision results of
value $(x+y)$ represented in the form of pairs $(x,y)$. In the following we
need product of numbers represented in this form. However, we will be
satisfied with approximations to the product, discarding terms of the order
of the product of the two low-order terms. Given two double-precision
operands $(a_h+a_l)$ and $(b_h+b_l)$ the following algorithm $\DblMult$
computes $(x,y)$ such that $(x+y)=[(a_h+a_l)(b_h+b_l)](1+\delta)$ where the
relative error $\delta$ is discussed in Section~3 below.

\begin{algorithm}[DblMult($a_h$,$a_l$,$b_h$,$b_l$)]~\\[-2ex]
 \[
 \begin{array}{lll}
     t &:=& \RN(a_l b_h);\\
     s &:=& \RN(a_h b_l + t);\\
     (x',u) &:=& \FastMult(a_h,b_h); \\
     (x'',v) &:=& \FastSum(x',s); \\
     y' &:=& \RN(u+v);\\
     (x,y) &:=& \FastSum(x'',y');
 \end{array}
 \]
\end{algorithm}
Note that the condition for applying $\FastSum$ is satisfied.

\section{The two algorithms}

We now give two algorithms for accurately computing $x^n$, where $x$
is a floating-point number, and $n$ is an integer greater than or
equal to $1$. We assume that an fma instruction is available, as it is used
in $\FastMult$ and thus implicitly also in $\DblMult$.

The first (${\cal O}(n)$ time) algorithm is derived from the
straightforward, $(n-1)$-multiplication, algorithm. It is simple to analyze
and will be faster than the other one if $n$ is small.

\begin{algorithm}[LinPower($x,n$), $n \geq 1$]~\\[-2ex]
\[
\begin{array}{lll}
(h,l) := (x,0);\\
\mbox{\bf for}\;i\;\mbox{\bf from}\;2\;\mbox{\bf to}\;n\;\mbox{\bf do}\\
\hspace{2ex}(h,v) := \FastMult($h,x$);\\
\hspace{6.5ex}l     := \RN(l\,x+v);\\
\mbox{\bf end do};\\
\mbox{\bf return} \;(h,l);
\end{array}
\]
\end{algorithm}
where the low order terms are accumulated with appropriate weights using a
Horner scheme evaluation. Algorithm LinPower uses $3n-3$ floating-point
operations.

The second (${\cal O}(\log(n))$-time) algorithm is based on successive
squarings.

\newcommand{\ind}{\hspace{2ex}}
\begin{algorithm}[LogPower($x,n$), $n \geq 1$]~\\[-2ex]
\[
\begin{array}{lll}
i:=n;\\
(h,l):=(1,0);\\
(u,v):=(x,0);\\
\mbox{\bf while}\;i>1\;\mbox{\bf do}\\
\ind\mbox{\bf if}\;(i\;\mbox{\bf mod}\;2)=1\;\mbox{\bf then}\\
\ind\ind(h,l) := \DblMult(h,l,u,v);\\
\ind\mbox{\bf end};\\
\ind(u,v)   := \DblMult(u,v,u,v);\\
\ind i      := \lfloor i/2 \rfloor;\\
\mbox{\bf end do};\\
\mbox{\bf return}\;\DblMult(h,l,u,v);
\end{array}
\]
\end{algorithm}

Due to the approximations performed in algorithm $\DblMult$, terms
corresponding to the product of low order terms are not included. A
thorough error analysis is performed below. The number of floating-point
operations used by the LogPower algorithm is between $11(1+\left\lfloor
  \log_2(n)\right\rfloor)$ and $11(1+2\left\lfloor\log_2(n)
\right\rfloor)$, whereas for LinPower it is $3(n-1)$. Hence, LogPower will
become faster than LinPower for values of $n$ around $30$ (but counting the
floating-point operations only gives a rough estimate, the actual threshold
will depend on the architecture and compiler).

\section{Error analysis}
We will use the following result.

\begin{theorem}[Theorem 2.2 of~\cite{Hig02}, p. 38]
Assume a radix-$r$ floating-point system $F$, with precision $p$. If
$x \in \mathbb{R}$ lies in the range of $F$, then
\[
\RN(x) = x(1+\delta),\;|\delta| < \frac{1}{2} r^{-p+1}.
\]
\end{theorem}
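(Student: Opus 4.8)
The plan is to reduce the relative-error bound to a statement about the \emph{absolute} rounding error measured against the local spacing of the machine numbers, and then to divide by a lower bound on $|x|$. First I would dispose of the trivial case $x=0$, where $\RN(0)=0$ and $\delta=0$; and since rounding to nearest is symmetric, $\RN(-x)=-\RN(x)$, I may assume $x>0$.

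Next I would fix the binade of $x$: let $e=\lfloor\log_r x\rfloor$, so that $r^e\le x<r^{e+1}$ (the hypothesis that $x$ lies in the range of $F$ guarantees that this exponent is admissible, so the neighbouring machine numbers exist). Every machine number in this binade has the form $m\,r^{e-p+1}$ for an integer $m$, so consecutive machine numbers are spaced exactly $r^{e-p+1}$ apart, and this spacing persists up to and including $r^{e+1}$ since that value is itself representable. Because $\RN(x)$ is by definition the machine number nearest $x$, the absolute error obeys $|\RN(x)-x|\le\tfrac12 r^{e-p+1}$. Dividing by $|x|\ge r^e$ yields $|\delta|=|\RN(x)-x|/|x|\le\tfrac12 r^{-p+1}$, which is the desired bound apart from the question of strictness.

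Upgrading this to the strict inequality is the one genuinely delicate point, and I expect it to be the main (if minor) obstacle. Equality in $|\delta|\le\tfrac12 r^{-p+1}$ would require the numerator to be maximal and the denominator minimal simultaneously, and I would show that these two extremes are incompatible. The absolute error equals $\tfrac12 r^{e-p+1}$ only when $x$ is a midpoint between two consecutive machine numbers, and the smallest such midpoint in the binade is $r^e+\tfrac12 r^{e-p+1}>r^e$, so the denominator is then strictly larger than $r^e$; conversely, when $x=r^e$ the value is itself representable, giving $\delta=0$. Since the bound can never be met with equality, $|\delta|<\tfrac12 r^{-p+1}$ follows.
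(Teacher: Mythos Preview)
Your argument is correct and is the standard one: locate $x$ in its binade $[r^e,r^{e+1})$, bound the absolute error by half the local spacing $r^{e-p+1}$, divide by the lower bound $r^e$ on $|x|$, and then observe that the two extremes (midpoint for the numerator, binade endpoint for the denominator) cannot be attained simultaneously, which yields the strict inequality.

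There is, however, nothing to compare against in the paper itself. The paper does not give its own proof of this statement; it is quoted verbatim as Theorem~2.2 of Higham's book and used as a black box in the subsequent error analyses of \textit{DblMult}, \textit{LogPower}, and \textit{LinPower}. So your proof is not an alternative to the paper's proof but rather a (perfectly good) filling-in of a cited result that the authors chose not to reproduce.
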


\subsection{Error of function DblMult}

\begin{theorem}
\label{thm-dblmult}
   Let $\epsilon = 2^{-p}$, where $p$ is the precision of the
   radix-$2$ floating-point system used.
   If $|a_l| \leq 2^{-p}|a_h|$ and $|b_l| \leq 2^{-p}|b_h|$ then the
   returned value $(x,y)$ of function DblMult$(a_h,a_l,b_h,b_l)$
   satisfies
   \[
   x+y = (a_h+a_l)(b_h+b_l)(1+\eta),
   \]
   with
   \[
   |\eta| \leq 6 \epsilon^2 + 16 \epsilon^3 + 17 \epsilon^4 + 11
   \epsilon^5 + 5 \epsilon^6 + \epsilon^7.
   \]
\end{theorem}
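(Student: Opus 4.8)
The plan is to unfold $\DblMult$ line by line, exploiting the fact that the only steps producing an \emph{uncaptured} rounding error are the three plain roundings that yield $t$, $s$ and $y'$; every $\FastMult$ and $\FastSum$ step returns its result together with an \emph{exact} correction term. Concretely, $\FastMult(a_h,b_h)$ gives $x'+u=a_hb_h$ exactly, and the two $\FastSum$ calls give $x''+v=x'+s$ and $x+y=x''+y'$ exactly. Using Theorem~2 (the standard relative-error model) I would write $t=a_lb_h(1+\delta_1)$, $s=(a_hb_l+t)(1+\delta_2)$ and $y'=(u+v)(1+\delta_3)$, with $|\delta_i|<\epsilon$.

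First I would substitute the three exact identities into $x+y=x''+y'$ and watch the leading terms telescope: since $x''=x'+s-v$ and $y'=u+v+(u+v)\delta_3$, the $v$ terms cancel and $x'+u=a_hb_h$, leaving the compact expression
\[
x+y \;=\; a_hb_h + s + (u+v)\,\delta_3 .
\]
This is the crucial simplification: it shows that the high-order part $a_hb_h$ is carried \emph{exactly}, so the error lives entirely in $s$ (an approximation of $a_hb_l+a_lb_h$) and in the tiny term $(u+v)\delta_3$.

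Next I would subtract the exact product $(a_h+a_l)(b_h+b_l)=a_hb_h+(a_hb_l+a_lb_h)+a_lb_l$ to isolate
\[
\eta\,(a_h+a_l)(b_h+b_l) \;=\; \bigl[s-(a_hb_l+a_lb_h)\bigr] + (u+v)\,\delta_3 - a_lb_l ,
\]
and bound each bracket. The hypotheses $|a_l|\le\epsilon|a_h|$, $|b_l|\le\epsilon|b_h|$ give $|a_hb_l|,|a_lb_h|\le\epsilon|a_hb_h|$ and $|a_lb_l|\le\epsilon^2|a_hb_h|$, so $s-(a_hb_l+a_lb_h)=(a_hb_l+a_lb_h)\delta_2+a_lb_h\delta_1(1+\delta_2)$ is $\mathcal{O}(\epsilon^2|a_hb_h|)$; the $\FastMult$/$\FastSum$ tails satisfy $|u|<\epsilon|a_hb_h|$ and $|v|<\epsilon|x'+s|\le\epsilon|a_hb_h|(1+\epsilon)^3$, so $(u+v)\delta_3$ is also $\mathcal{O}(\epsilon^2|a_hb_h|)$. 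Every surviving contribution is thus quadratic in $\epsilon$, which is precisely why $\eta$ starts at $6\epsilon^2$.

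The main obstacle — and the only genuinely laborious part — is the quantitative bookkeeping. One must expand each $(1+\delta_i)$ product, track the higher powers of $\epsilon$ accumulating in $v$ (through its $(1+\epsilon)^3$ factor) and in the cross terms, sum the contributions to $\epsilon^2|a_hb_h|(6+4\epsilon+3\epsilon^2+\epsilon^3)$, and finally divide by $(a_h+a_l)(b_h+b_l)$, comparing it with $a_hb_h$ via the hypotheses. The delicate point is reconciling this division with the target: writing the result in the factored form $\epsilon^2(6+4\epsilon+3\epsilon^2+\epsilon^3)(1+\epsilon)^2$ and expanding the product reproduces exactly the stated polynomial $6\epsilon^2+16\epsilon^3+17\epsilon^4+11\epsilon^5+5\epsilon^6+\epsilon^7$. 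I would verify those six coefficients at the very end as a consistency check, since it is easy to drop or double-count an $\epsilon^3$ term while accumulating.
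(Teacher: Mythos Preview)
Your proposal is correct and follows essentially the same route as the paper: the paper too uses the exactness of $\FastMult$ and the two $\FastSum$ calls to telescope to $x+y=a_hb_h+s+(u+v)\epsilon_1$, bounds $v$ by $|a_hb_h|\,\epsilon(1+\epsilon)^3$, arrives at the intermediate error $|\eta_2|\le 6\epsilon^2+4\epsilon^3+3\epsilon^4+\epsilon^5$ relative to $a_hb_h$, and then passes to $(a_h+a_l)(b_h+b_l)$ via the factor $(1+\epsilon)^2$ to obtain the stated polynomial. The only cosmetic difference is that the paper encodes $a_l,b_l,u,v$ as $\epsilon_i a_hb_h$ from the start rather than subtracting the exact product explicitly as you do.
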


\noindent\textbf{Notes:}
\begin{enumerate}
\setlength{\itemsep}{-5pt}
  \item as soon as $p \geq 5$, we have $|\eta| \leq 7\epsilon^2$;
  \item in the case of single precision $(p=24)$, $|\eta| \leq
  6.000001 \epsilon^2$;
  \item in the case of double precision $(p=53)$, $|\eta| \leq \left(6 +
  2\times{}10^{-15}\right)\epsilon^2.$
\end{enumerate}

\begin{proof} Following the notation in Algorithm~5, with $\epsilon_i$'s
  being variables of absolute value less than $\epsilon$, we have
\begin{eqnarray*}
x+y &=& x'' + \RN(u+v)\\
&=& x'' + (u+v)(1+\epsilon_1)\\
&=& (x'' + v) + u + u\epsilon_1 + v\epsilon_1 \\
&=& x' + s  + u + u\epsilon_1 + v\epsilon_1 \\
&=& a_hb_h + s + u\epsilon_1 + v\epsilon_1 \\
&=& a_hb_h + [a_hb_l + (a_lb_h)(1+\epsilon_3)](1+\epsilon_2) + u\epsilon_1 + v\epsilon_1 \\
&=& a_hb_h + a_hb_l + a_lb_h + a_hb_l\epsilon_2 + a_lb_h\epsilon_2 +
a_lb_h\epsilon_2\epsilon_3+a_lb_h\epsilon_3  + u\epsilon_1 +
v\epsilon_1.
\end{eqnarray*}

We also have $a_l=\epsilon_4 a_h$, $b_l=\epsilon_5 b_h$,
$u=\epsilon_6 a_hb_h$, and
\begin{eqnarray*}
v &=& \epsilon_7 (x' + s) \\
&=& \epsilon_7\left(a_hb_h(1+\epsilon_8)+[a_hb_l
  + a_lb_h(1+\epsilon_3)](1+\epsilon_2)\right)\\
&=& \epsilon_7\left(a_hb_h(1+\epsilon_8)+[\epsilon_5a_hb_h
   + \epsilon_4a_hb_h(1+\epsilon_3)](1+\epsilon_2)\right)\\
&=& \epsilon_7a_hb_h\left(1 + \epsilon_8 + \epsilon_5+\epsilon_2\epsilon_5
    + \epsilon_4+\epsilon_2\epsilon_4 + \epsilon_3\epsilon_4
    + \epsilon_2\epsilon_3\epsilon_4\right)\\
&=& \eta_1 a_hb_h,
\end{eqnarray*}
with $|\eta_1| \leq \epsilon + 3\epsilon^2 + 3 \epsilon^3 + \epsilon^4.$
Hence
\begin{eqnarray*}
x+y &=& a_hb_h + a_hb_l + a_lb_h + (a_lb_l - \epsilon_4\epsilon_5 a_hb_h)
  + a_hb_h (\epsilon_2\epsilon_5 + \epsilon_2\epsilon_4
  + \epsilon_2\epsilon_3\epsilon_4 + \epsilon_3\epsilon_4
  + \epsilon_1\epsilon_6 + \eta_1 \epsilon_1)\\
&=& (a_h+a_l)(b_h+b_l) + a_hb_h \eta_2,
\end{eqnarray*}
with $|\eta_2| \leq 6 \epsilon^2 + 4\epsilon^3 + 3\epsilon^4 + \epsilon^5$.

Now, from $a_h=(a_h+a_l)(1+\epsilon_9)$ and
$b_h=(b_h+b_l)(1+\epsilon_{10})$ we deduce
\[
x+y = (a_h+a_l)(b_h+b_l)(1+\eta),
\]
with $\eta = (1+\epsilon)^2\eta_2$, which gives $|\eta| \leq 6 \epsilon^2 +
16 \epsilon^3 + 17 \epsilon^4 + 11 \epsilon^5 + 5 \epsilon^6 + \epsilon^7.$
\end{proof}

\subsection{Error of algorithm LogPower}

\begin{theorem}
\label{thm-LogPower}
The two values $h$ and $l$ returned by algorithm LogPower satisfy
\[
h+l = x^n(1+\alpha),
\]
with
\[
(1-|\eta|)^{n-1} \leq 1+\alpha \leq (1+|\eta|)^{n-1}
\]
where $|\eta| \leq 6 \epsilon^2 + 16 \epsilon^3 + 17 \epsilon^4 + 11
\epsilon^5 + 5 \epsilon^6 + \epsilon^7$ is the same value as in
Theorem~\ref{thm-dblmult}.
\end{theorem}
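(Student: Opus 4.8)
The plan is to run an induction over the iterative structure of LogPower, tracking for every double-word pair held by the algorithm both the exact power of $x$ it approximates and the accumulated relative error. First I would establish the standard binary-exponentiation loop invariant: writing $i$ for the loop counter, one checks that $(h+l)\cdot(u+v)^i$ equals $x^n$ in exact arithmetic, so that on exit with $i=1$ the pair $(u,v)$ approximates $x^{2^k}$ with $k=\lfloor\log_2 n\rfloor$, the pair $(h,l)$ approximates $x^{\,n-2^k}$, and the concluding \DblMult\ returns an approximation to $x^n$. This identifies exactly which power of $x$ each intermediate pair is meant to represent.

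The core is then the invariant, proved by induction: whenever the algorithm holds a pair approximating $x^m$ with $m\ge 1$, that pair equals $x^m(1+\alpha_m)$ with $(1-|\eta|)^{m-1}\le 1+\alpha_m\le(1+|\eta|)^{m-1}$, where $|\eta|$ is the \DblMult\ bound of Theorem~\ref{thm-dblmult}. The base case is the exact pair $(x,0)=x^1$ (zero error, matching the exponent $m-1=0$). The two inductive steps mirror the squaring and the cross-product \DblMult\ calls. For a squaring $x^m\mapsto x^{2m}$, Theorem~\ref{thm-dblmult} multiplies the error by one new factor $(1+\eta_i)$, and since $2(m-1)+1=2m-1$ the new bound is exactly $(1\pm|\eta|)^{2m-1}$, i.e.\ ``power minus one.'' For a product of $x^a$ and $x^b$, the two error factors combine with one new $(1+\eta_i)$, and $(a-1)+(b-1)+1=a+b-1$ again reproduces the invariant. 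Applying this to the value $x^n$ returned by the final \DblMult\ yields the stated bound.

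Two points require care. First, the precondition of Theorem~\ref{thm-dblmult} (that each low-order term is at most $\epsilon$ times its high-order term) must hold at every \DblMult\ call; this follows because the initial pairs $(1,0)$ and $(x,0)$ satisfy it trivially and because the output $(x,y)$ of \DblMult\ is produced by a concluding \FastSum, whence $|y|\le\frac12\,\ulp(x)\le\epsilon\,|x|$. Second, to obtain the clean exponent $n-1$ rather than $n$, I would verify that the first update of $(h,l)$, which multiplies the exact pair $(1,0)$, is \emph{exact}: tracing \DblMult\ with $a_h=1,\ a_l=0$ gives $t=0$, $s=b_l$, $u=0$, and two \FastSum\ steps that reproduce $b_h+b_l$ with no rounding, so no $\eta$ factor is spent there and the pair $(1,0)=x^0$ enters the induction without disturbing the count.

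The main obstacle, and the conceptually surprising step, is the squaring case: although LogPower performs only ${\cal O}(\log n)$ operations, a squared pair $x^{2^j}$ accumulates exactly $2^j-1$ error factors, so the per-value error is \emph{not} logarithmic but matches the naive $m-1$ count. The whole argument hinges on the identity $2(m-1)+1=2m-1$ closing the induction: one extra rounding per squaring is precisely what keeps the invariant ``number of rounding factors $\le$ (power)$-1$'' in force, and hence yields $(1\pm|\eta|)^{n-1}$ at $x^n$.
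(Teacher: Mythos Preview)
Your proof is correct and follows essentially the same inductive argument as the paper, keyed on the identity $(i-1)+(j-1)+1=i+j-1$ so that an approximation to $x^k$ carries a relative error bounded by $(1\pm|\eta|)^{k-1}$. You are in fact more careful than the paper on two points it leaves implicit: that the \DblMult\ precondition $|y|\le\epsilon|x|$ is maintained because each call ends with a \FastSum, and that the first multiplication involving $(h,l)=(1,0)$ is exact, which is precisely what pins the final exponent at $n-1$ rather than~$n$.
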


\begin{proof}
  Algorithm LogPower computes approximations to powers of $x$, using
  $x^{i+j} = x^ix^j$. By induction, one easily shows that the approximation
  to $x^k$ is of the form $x^k(1+\beta_k)$, where $(1-|\eta|)^{k-1} \leq
  (1+\beta_k) \leq (1+|\eta|)^{k-1}$. If we call $\eta_{i+j}$ the relative
  error (obtained from Theorem~\ref{thm-dblmult}) when multiplying together
  the approximations to $x^i$ and $x^j$, the induction follows from
\[
(1-\eta)^{i-1}(1-\eta)^{j-1}(1-\eta) \leq
\left(x^i(1+\beta_i)\right)\left(x^j(1+\beta_j)\right)(1+\eta_{i+j})
\leq (1+\eta)^{i-1}(1+\eta)^{j-1}(1+\eta).
\]
\end{proof}
%

Table~\ref{table-logpower-double} gives bounds on $|\alpha|$ for
several values of $n$ (note that the bound is an increasing value of
$n$), assuming the algorithm is used in double precision.

Define the \emph{significand} of a non-zero real number $u$ to be
\[
\frac{u}{2^{\left\lfloor \log_2 |u| \right\rfloor}}.
\]
Define $\alpha_{max}$ as the bound on $|\alpha|$ obtained for a given value
of $n$.  From
\[
x^n(1 - \alpha_{max}) \leq h+l \leq x^n(1 + \alpha_{max}),
\]
we deduce that the significand of $h+l$ is within $2 \alpha_{max}$
from $x^n/2^{\left\lfloor \log_2 |h+l| \right\rfloor}$.
  From the results given in Table~\ref{table-logpower-double}, we
deduce that for all practical values of $n$ the significand of $h+l$
is within much less than $2^{-53}$ from $x^n/2^{\left\lfloor \log_2
|h+l| \right\rfloor}$ (indeed, to get $2 \alpha_{\max}$ larger that
$2^{-53}$, we need $n > 2^{49}$). This means that $\RN(h+l)$ is
within less than one ulp from $x^n$, hence

\begin{theorem}
  If algorithm LogPower is implemented in double precision, then $\RN(h+l)$
  is a faithful rounding of $x^n$, as long as $n \leq 2^{49}$.
\end{theorem}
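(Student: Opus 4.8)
The plan is to show that $\RN(h+l)$ coincides with one of the two machine numbers $\RD(x^n)$ and $\RU(x^n)$ bracketing $x^n$; equivalently, that no floating-point number lies strictly between $\RN(h+l)$ and $x^n$, which is precisely the assertion that $\RN(h+l)$ is a faithful rounding. The starting point is Theorem~\ref{thm-LogPower}, which gives $h+l = x^n(1+\alpha)$ with $|\alpha| \leq \alpha_{\max} := (1+|\eta|)^{n-1}-1$, so the entire argument reduces to controlling how the single outer rounding $\RN$ interacts with this relative error.

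First I would pass to the significand domain already introduced above: set $e = \lfloor \log_2|h+l|\rfloor$ and divide everything by $2^e$, so that the significand of $h+l$ lies in $[1,2)$ and the machine numbers of its binade become the integer multiples of $2^{-52}$ (recall $p=53$). As observed in the discussion preceding the statement, dividing $x^n(1-\alpha_{\max}) \leq h+l \leq x^n(1+\alpha_{\max})$ by $2^e$ shows that the significand of $h+l$ differs from $x^n 2^{-e}$ by at most $2\alpha_{\max}$. Since $\RN$ displaces $h+l$ by at most $\hulp$, i.e.\ by at most $2^{-53}$ in the scaled picture, the scaled value $\RN(h+l)\,2^{-e}$ differs from $x^n 2^{-e}$ by at most $2^{-53}+2\alpha_{\max}$.

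The decisive step is then purely arithmetic: provided $2\alpha_{\max} < 2^{-53}$, this distance is strictly smaller than $2^{-52}$, the spacing of the multiples of $2^{-52}$. A multiple of $2^{-52}$ lying within less than one spacing of the real number $x^n 2^{-e}$ is necessarily one of the two multiples bracketing it, so $\RN(h+l)$ is a faithful rounding of $x^n$. To convert this condition into the stated range of $n$, I would bound $\alpha_{\max} \approx 6(n-1)\epsilon^2$ via Theorem~\ref{thm-dblmult} (with $\epsilon = 2^{-53}$) and solve $12(n-1)\,2^{-106} < 2^{-53}$, i.e.\ $n-1 < 2^{53}/12$; since $2^{53}/12 > 2^{49}$ this holds comfortably for every $n \leq 2^{49}$, matching the threshold announced before the statement and the figures of Table~\ref{table-logpower-double}.

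I expect the only genuinely delicate point to be the binade boundary. The clean identification of machine numbers with multiples of $2^{-52}$ after scaling by $2^{-e}$ tacitly assumes that $x^n$ and $h+l$ share a binade, and this can fail when $x^n$ is extremely close to a power of two, where the spacing jumps by a factor of two across the boundary. Because $|h+l-x^n|$ is below a half-ulp, such a straddle can occur only in a vanishingly thin neighbourhood of a power of two, and the residual work is a short case analysis — according to whether $x^n$ falls just below or just above that power of two — verifying that $\RN(h+l)$ still lands on one of the two machine numbers adjacent to $x^n$ and never one step beyond.
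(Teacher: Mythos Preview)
Your proposal is correct and follows essentially the same approach as the paper: both start from Theorem~\ref{thm-LogPower}, pass to the significand domain scaled by $2^{-\lfloor\log_2|h+l|\rfloor}$, observe that the significand of $h+l$ lies within $2\alpha_{\max}$ of the scaled $x^n$, and conclude faithful rounding once $2\alpha_{\max}<2^{-53}$, which the paper notes requires $n\le 2^{49}$. You are more explicit than the paper about the extra half-ulp contributed by the outer $\RN$ and about the binade-boundary edge case, but the skeleton of the argument is identical.
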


\begin{table}[h]
\[
\begin{array}{|r|r||r|r|}
\hline
n & -\log_2(\alpha_{max}) & n & -\log_2(\alpha_{max}) \\
\hline\hline
3 & 102.41 & 1000 & 93.45 \\
\hline
4 & 101.83 & 10{,}000 & 90.12 \\
\hline
5 & 101.41 & 100{,}000 & 86.80 \\
\hline
10 & 100.24 & 1{,}000{,}000 & 83.48 \\
\hline
20 & 99.16 & 10{,}000{,}000 & 80.16 \\
\hline
30 & 98.55 & 100{,}000{,}000 & 76.83 \\
\hline
40 & 98.12 & 2^{32}       & 71.41 \\
\hline
50 & 97.80 & & \\
\hline
100 & 96.78 & & \\
\hline
200 & 95.77 & & \\
\hline
\end{array}
\]
\caption{Binary logarithm of the relative accuracy
($-\log_2(\alpha_{max})$), for various values of $n$ assuming
algorithm LogPower is used in double precision.}
\label{table-logpower-double}
\end{table}

Moreover, for $n \leq 10^8$, $\RN(h+l)$ is within $0.50000007$ ulps
from the exact value: we are very close to correct rounding (indeed,
we almost always return a correctly rounded result), yet we cannot
guarantee correct rounding, even for the smallest values of $n$.
This requires a much better accuracy, as shown in
Section~\ref{section-correct-rounding}.  To guarantee a correctly
rounded result in double precision, we will need to run algorithm
LogPower in double-extended precision.
Table~\ref{table-logpower-extended} gives bounds on $|\alpha|$ for
several values of $n$ assuming the algorithm is realized in
double-extended precision. As expected, we are 22 bits more
accurate.

\begin{table}[h]
\[
\begin{array}{|r|r||r|r|}
\hline
n & -\log_2(\alpha_{max}) & n & -\log_2(\alpha_{max}) \\
\hline\hline
3 & 124.41 & 1000 & 115.45 \\
\hline
4 & 123.83 & 10{,}000 & 112.12 \\
\hline
5 & 123.41 & 100{,}000 & 108.80 \\
\hline
10 & 122.24 & 1{,}000{,}000 & 105.48 \\
\hline
20 & 121.16 & 10{,}000{,}000 & 102.16 \\
\hline
30 & 120.55 & 100{,}000{,}000 & 98.83 \\
\hline
40 & 120.12 & 2^{32}       & 93.41 \\
\hline
50 & 119.80 & & \\
\hline
100 & 118.78 & & \\
\hline
200 & 117.77 & & \\
\hline
\end{array}
\]
\caption{Binary logarithm of the relative accuracy
  ($-\log_2(\alpha_{max})$), for various values of $n$ assuming
  algorithm LogPower is implemented in double-extended precision.}
\label{table-logpower-extended}
\end{table}

\subsection{Error of algorithm LinPower}

Define $h_i$, $v_i$, $l_i$ as the values of variables $h$, $v$ and $l$ at
the end of the loop of index $i$ of the algorithm. Define $\hat{l}_i$ as
the value variable $l_i$ would have if the instructions $l := \RN(lx+v)$
were errorless (that is, if instead we had $l := (lx+v)$ exactly):
\begin{equation}
\label{def-hatli}
\hat{l}_i = v_i + v_{i-1} x + v_{i-2} x^2 + v_{i-3} x^3 + \cdots + v_2 x^{i-2}.
\end{equation}

Initially let $h_1=x$, $v_1 = l_1 = 0$.
By induction, one can easily show that
\begin{equation}
\label{sum-vi-xj}
x^i = h_i + v_i + v_{i-1}x + v_{i-2}x^2 + v_{i-3}x^3 +\cdots{} + v_2x^{i-2},
\end{equation}
hence we have
\[
x^i = h_i + \hat{l}_i.
\]
The algorithm only computes an approximation ${l}_i$ to $\hat{l}_i$. To
evaluate the error of the algorithm, we must therefore estimate the
distance between ${l}_i$ and $\hat{l}_i$. We have ${l}_1 = \hat{l}_1 = 0$,
and ${l}_2 = \hat{l}_2 = v_2$ exactly. Define $\epsilon_i$ as the number of
absolute value less than $\epsilon = 2^{-p}$ such that
\[
{l}_i = \RN({l}_{i-1}x+v_i) =
({l}_{i-1}x+v_i)(1+\epsilon_i).
\]
We have $l_3 = \hat{l}_3(1+\epsilon_3)$, and by induction, we find
for $i \geq 4$, using $v_i = \hat{l}_i - \hat{l}_{i-1}x$:
\begin{eqnarray}\nonumber
l_i &=& \hat{l}_i(1+\epsilon_i)\\\nonumber
&+& \hat{l}_{i-1}\epsilon_{i-1}x(1+\epsilon_i) \\
\label{li-vs-hat-li}
&+&  \hat{l}_{i-2}\epsilon_{i-2}x^2(1+\epsilon_{i-1})(1+\epsilon_i)\\\nonumber
&\vdots& \\\nonumber
&+& \hat{l}_3\epsilon_{3}x^{i-3}(1+\epsilon_{4})(1+\epsilon_{5})
    \cdots{}(1+\epsilon_{i-1})(1+\epsilon_i).
\end{eqnarray}
To derive a useful bound from this result, we must make a simplifying
hypothesis. We know that $|v_i| \leq \epsilon |h_i|.$ We assume $h_i$ is
close enough to $x^i$, so that
\[
|v_i| \leq 2 \epsilon |x|^i
\]
(this means that our estimate for $x^n$ will become wrong when the
algorithm becomes very inaccurate for $x^i$, $i \leq n$).
From~(\ref{def-hatli}), we therefore have:
\[
|\hat{l}_i| \leq  2 (i-1) \epsilon |x|^i,
\]
from which, using (\ref{li-vs-hat-li}), we deduce
\[
l_n = \hat{l}_n + \eta,
\]
where
\begin{equation}
  |\eta| \leq 2 |x|^n \epsilon^2 \left[ (n-1) + (n-2)(1+\epsilon) +
    (n-3)(1+\epsilon)^2 + \cdots{} + 2(1+\epsilon)^{n-3}\right].
\end{equation}
This gives the following result\\

\begin{theorem}[Accuracy of algorithm LinPower]
  If for $i < n, |v_i| \leq 2^{1-p}|x|^i$, the final computed values $h_n$
  and $l_n$ of the variables $h$ and $l$ of the algorithm satisfy
\[
h_n + l_n = x^n (1+\alpha),
\]
where
$
|\alpha| \leq 2 \epsilon^2 \left[(n-1) + (n-2)(1+\epsilon) +
(n-3)(1+\epsilon)^2 + \cdots{} + 2(1+\epsilon)^{n-3}\right].
$
\end{theorem}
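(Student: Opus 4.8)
The plan is to observe that essentially all of the work has already been carried out in the discussion preceding the statement, so that the theorem follows by assembling two facts. First I would recall the exact decomposition~(\ref{sum-vi-xj}), which at $i=n$ reads $x^n = h_n + \hat{l}_n$: the high part $h_n$ together with the \emph{errorless} Horner accumulation $\hat{l}_n$ of the correction terms reproduces $x^n$ with no error at all. Consequently the only discrepancy between the computed result $h_n+l_n$ and $x^n$ is the gap between the computed low part $l_n$ and its ideal value $\hat{l}_n$.

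Next I would invoke the error representation established just above the statement, namely $l_n = \hat{l}_n + \eta$, where $\eta$ collects the rounding errors committed in the successive evaluations $l_i := \RN(l_{i-1}x + v_i)$ and, under the standing hypothesis $|v_i| \leq 2^{1-p}|x|^i$, is bounded by
\[
|\eta| \leq 2|x|^n \epsilon^2\left[(n-1) + (n-2)(1+\epsilon) + \cdots + 2(1+\epsilon)^{n-3}\right].
\]
Adding $h_n$ to both sides and substituting the exact decomposition gives $h_n + l_n = (h_n + \hat{l}_n) + \eta = x^n + \eta$. To finish, assuming $x \neq 0$ (the case $x=0$ being trivial), I would set $\alpha = \eta/x^n$, so that $h_n + l_n = x^n(1+\alpha)$, and then divide the bound on $|\eta|$ by $|x|^n$ to obtain exactly $|\alpha| \leq 2\epsilon^2[(n-1) + (n-2)(1+\epsilon) + \cdots + 2(1+\epsilon)^{n-3}]$, which is the claimed inequality.

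The genuinely hard part is not this final assembly but the two ingredients it rests on, both already done in the lead-up. The first is the inductive identity~(\ref{li-vs-hat-li}), which expresses $l_n$ as a weighted sum of the $\hat{l}_j$ with accumulated rounding-error factors; establishing it requires carefully tracking how each $\epsilon_i$ propagates through the Horner recurrence, using $v_i = \hat{l}_i - \hat{l}_{i-1}x$ to rewrite the increment at each step. The second, and the place where the argument is genuinely lossy, is the simplifying hypothesis $|v_i| \leq 2\epsilon|x|^i$ used to pass from the guaranteed bound $|v_i| \leq \epsilon|h_i|$ to the clean estimate $|\hat{l}_i| \leq 2(i-1)\epsilon|x|^i$ via~(\ref{def-hatli}). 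This is precisely the assumption that each $h_i$ stays close to $x^i$, and it is what makes the stated bound conditional rather than unconditional; I would flag it as the weak point of the analysis. Once these two ingredients are in hand, the theorem is immediate from the combination above.
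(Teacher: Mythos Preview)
Your proposal is correct and mirrors the paper's approach exactly: the paper carries out the entire derivation \emph{before} stating the theorem (the exact identity $x^n=h_n+\hat{l}_n$, the inductive expansion~(\ref{li-vs-hat-li}), the hypothesis $|v_i|\leq 2\epsilon|x|^i$ yielding $|\hat{l}_i|\leq 2(i-1)\epsilon|x|^i$, and the resulting bound on $|\eta|$), and then simply announces the theorem as a summary of what was just established. Your assembly $h_n+l_n=x^n+\eta$ with $\alpha=\eta/x^n$ is precisely the implicit final step, and your identification of the simplifying hypothesis as the weak point matches the paper's own caveat.
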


Let us try to compute an estimate of the coefficient $\gamma = (n-1) +
(n-2)(1+\epsilon) + (n-3)(1+\epsilon)^2 + \cdots{} + 2(1+\epsilon)^{n-3}$
in $\alpha$.

Define a function
\[
\varphi(t) = t^{n-1}+(1+\epsilon)t^{n-2} + (1+\epsilon)^2t^{n-3}+
\cdots{} + (1+\epsilon)^{n-3}t^2.
\]
One can notice that $\gamma = \varphi'(1)$, so that if we are able
to find a simple formula for $\varphi(t)$ we will be able to deduce
a formula for $\gamma$. We have
\[
\varphi(t) = (1+\epsilon)^{n-1}\left[
\left(\frac{t}{1+\epsilon}\right)^{n-1} +
\left(\frac{t}{1+\epsilon}\right)^{n-2} + \cdots{}
+\left(\frac{t}{1+\epsilon}\right)^{2}\right],
\]
hence
\[
\varphi(t) = (1+\epsilon)^{n-1}\left[
\frac{\left(\frac{t}{1+\epsilon}\right)^n-1}{\frac{t}{1+\epsilon}-1}-\frac{t}{1+\epsilon}-1
\right].
\]
Thus
\[
\varphi'(t) = (1+\epsilon)^{n-2}\left[
\frac{(n-1)\left(\frac{t}{1+\epsilon}\right)^n-n\left(\frac{t}{1+\epsilon}\right)^{n-1}+1}{\left(\frac{t}{1+\epsilon}-1\right)^2}-1
\right],
\]

Hence a bound on the value of $|\alpha|$ is,
\[
|\alpha| \leq 2 \epsilon^2 (1+\epsilon)^{n-2}\left[
\frac{(n-1)\left(\frac{1}{1+\epsilon}\right)^n-n\left(\frac{1}{1+\epsilon}\right)^{n-1}+1}{\left(\frac{1}{1+\epsilon}-1\right)^2}-1
\right] \approx (n^2-n-2)\epsilon^2.
\]

Table~\ref{table-alpha-linpower} gives the obtained bound on $|\alpha|$ for
several values of $n$, assuming double precision ($\epsilon = 2^{-53}$).
That table shows that as soon as $n$ is larger than a few units, algorithm
LinPower is less accurate than algorithm LogPower.


\begin{table}[h]
\[
\begin{array}{|r|l|}
\hline n & -\log_2(\alpha_{max}) \\
\hline\hline
3 & 104.00 \\
\hline
4 & 102.68 \\
\hline
5 & 101.83 \\
\hline
10 & 99.54 \\
\hline
20 & 97.43 \\
\hline
30 & 96.23 \\
\hline
100 & 92.72 \\
\hline
\end{array}
\]
\caption{Binary logarithm of the relative accuracy
  ($-\log_2(\alpha_{max})$), for various values of $n$ assuming
  algorithm LinPower is implemented in double precision.}
\label{table-alpha-linpower}
\end{table}

\section{Correct rounding}
\label{section-correct-rounding}

In this section we consider algorithm LogPower only: first because
it is the fastest for all reasonable values of $n$, second because
it is the only one for which we have certain error bounds (the error
bounds of algorithm LinPower are approximate only). And if needed,
specific algorithms could be designed for small values of $n$. We
are interested in getting correctly rounded results in double
precision. To do so, we assume that we perform algorithm LogPower in
double extended precision.  The algorithm returns two
double-extended numbers $h$ and $l$ such that
\[
x^n (1 - \alpha_{max}) \leq h+l \leq x^n (1 + \alpha_{max}),
\]
where $\alpha_{max}$ is given in Table~\ref{table-logpower-extended}.

In the following we will need to distinguish two roundings, e.g.,
$\RNe$ means round-to-nearest in extended double precision and
$\RNd$ is round-to-nearest in double precision. Let $\ulp(\cdot)$
denote ``unit-in-last-position'' such that $|x-\RN(x)| \leq \hulp(x)$.

V.~Lef\`evre introduced a new method for finding hardest-to-round cases for
evaluating a regular function~\cite{Lef2000,Lef99}. That method allowed
Lef\`evre and Muller to give such cases for the most familiar elementary
functions~\cite{LefevreMuller2001a}. Recently, Lef\`evre adapted his method
to the case of functions $x^n$ and $x^{1/n}$, when $n$ is an integer. For
instance, in double-precision arithmetic, the hardest to round case for
function $x^{51}$ corresponds to
\[
x = 1.0100010111101011011011101010011111100101000111011101
\]\\[-5ex]
we have
\[
\begin{array}{lll}
x^{51}  &=&
\underbrace{1.1011001110100100011100100001100100000101101011101110}_{53\mbox{~bits}}\mbox{~}1
\\
        & & \underbrace{0000000000 \cdots{}0000000000}_{59\mbox{~zeros}}100\cdots
        \times 2^{17}
\end{array}
\]
which means that $x^n$ is extremely close to the exact middle of two
consecutive double-precision numbers. There is a run of $59$ consecutive
zeros after the rounding bit. This case is the worst case for all values of
$n$ between $3$ and $\nmax$.  Table~\ref{worst-cases-double} gives the
maximal length of the chains of identical bits after the rounding bit
for $3 \leq n \leq \nmax$.\\[-4ex]
\begin{table}[h]
\[\footnotesize
\begin{array}{|l|l|}
\hline
n & \ \begin{array}{l}
          \mbox{Number of identical bits} \\
          \mbox{after the rounding bit}
      \end{array} \\
\hline\hline  32  &  48
\\ \hline 76, 81, 85  &  49
\\ \hline 9, 15, 16, 31, 37, 47, 54, 55, 63, 65, 74, 80, 83, 86,
 105, 109, 126, 130  &  50
\\ \hline
\begin{array}{l}
  10, 14, 17, 19, 20, 23, 25, 33, 34, 36, 39, 40, 43, 46, 52, 53, \\
  72, 73, 75, 78, 79, 82, 88, 90, 95, 99, 104, 110, 113, 115, 117, \\
  118, 119, 123, 125, 129, 132, 133, 136, 140
\end{array} &  51
\\ \hline
\begin{array}{l}
  3, 5, 7, 8, 22, 26, 27, 29, 38, 42, 45, 48, 57, 60, 62, 64, 68, 69, \\
  71, 77, 92, 93, 94, 96, 98, 108, 111, 116, 120, 121, 124, 127, 128, \\
  131, 134, 139, 141
\end{array} &  52
\\ \hline 6, 12, 13, 21, 58, 59, 61, 66, 70, 102, 107, 112, 114,
 137, 138, 145  &  53
\\ \hline 4, 18, 44, 49, 50, 97, 100, 101, 103, 142  &  54
\\ \hline 24, 28, 30, 41, 56, 67, 87, 122, 135, 143  &  55
\\ \hline 89, 106  &  56
\\ \hline 11, 84, 91  &  57
\\ \hline 35, 144  &  58
\\ \hline 51  &  59
\\ \hline
\end{array}\\[-3ex]
\]
\caption{Maximal length of the chains of identical bits after the
  rounding bit (assuming the target precision is double precision)
  in the worst cases for $n$ from $3$ to $\nmax$.}
\label{worst-cases-double}
\end{table}
\newpage
Define a \emph{breakpoint} as the exact middle of two consecutive double
precision numbers. $\RNd(h+l)$ will be equal to $\RNd(x^n)$ if and only if
there is no breakpoint between $x^n$ and $h+l$.

The worst case obtained shows that if $x$ is a double-precision
number, and if $3 \leq n \leq \nmax$, then the significand $y$ of
$x^{51}$ is always at a distance larger than $2^{-113}$ from the
breakpoint~$\mu$ (see Figure~\ref{fig-1}) where the distance
$|y-\mu| \geq 2^{-(53+59+1)}=2^{-113}$.

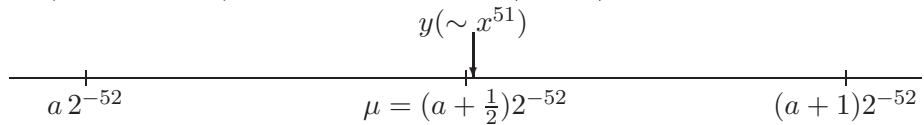
\begin{figure}[hb]
\begin{center}\small
\setlength{\unitlength}{1mm}
\begin{picture}(120,6)
\put(0,0){\line(1,0){120}}
\put(10,-1){\line(0,1){2}}\put(10,-4){\mb{$a\,2^{-52}$}}
\put(60,-1){\line(0,1){2}}\put(60,-4){\mb{$\mu=(a+\frac12)2^{-52}$}}
\put(110,-1){\line(0,1){2}}\put(110,-4){\mb{$(a+1)2^{-52}$}}
\put(61,6){\vector(0,-1){6}}\put(61,7){\mb{$y(\sim x^{51})$}}
\end{picture}
\end{center}
\caption{Position of the hardest to round case $y=x^{51}$ within rounding
  interval $[a2^{-52};(a+1)2^{-52}]$ with breakpoint
  $\mu=(a+\frac12)2^{-52}$, for significand defined by integer~$a$.}
\label{fig-1}
\end{figure}

We know that the significand of $h+l$ is within $2 \alpha_{max}$
from that of $x^n$, where $\alpha_{max}$ (as given by its binary
logarithm) is listed in Table~\ref{table-logpower-extended}. For all
values of $n$ less than or equal to $\nmax$, we have $2 \alpha_{max}
\leq 2^{-113}$, thus $\RNd(h+l) = \RNd(x^n)$. We therefore get the
following result:

\begin{theorem}
  \label{th-dbleextended-correct-rounding} If algorithm LogPower is run in
  double-extended precision, and if $3 \leq n \leq \nmax$, then $\RNd(h+l) =
  \RNd(x^n)$: Hence by rounding $h+l$ to the nearest double-precision
  number, we get a correctly rounded result.
\end{theorem}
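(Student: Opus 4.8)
The plan is to combine the relative-error bound for LogPower in double-extended precision with the worst-case distance-to-breakpoint data produced by Lef\`evre's exhaustive search. The guiding principle, stated just before the theorem, is that $\RNd(h+l) = \RNd(x^n)$ holds precisely when no breakpoint lies strictly between $x^n$ and $h+l$. So the entire argument reduces to showing that $h+l$ and $x^n$ cannot straddle a breakpoint.

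First I would invoke Theorem~\ref{thm-LogPower} together with Table~\ref{table-logpower-extended}: running LogPower in double-extended precision gives $h+l = x^n(1+\alpha)$ with $|\alpha| \leq \alpha_{max}$, where $\alpha_{max}$ is the tabulated bound for the relevant $n$. From $x^n(1-\alpha_{max}) \leq h+l \leq x^n(1+\alpha_{max})$ I would pass to significands, dividing through by the common factor $2^{\lfloor \log_2|h+l|\rfloor}$ as in the earlier faithful-rounding discussion, and conclude that the significand of $h+l$ differs from that of $x^n$ by at most $2\alpha_{max}$.

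Next I would bring in the computational worst-case result. For each $n$ with $3 \leq n \leq \nmax$, Lef\`evre's search (summarized in Table~\ref{worst-cases-double}) bounds the longest run of identical bits following the rounding bit, the extreme case being $x^{51}$ with its $59$ consecutive zeros. A run of length $k$ translates into a guaranteed separation: the significand $y$ of $x^n$ satisfies $|y-\mu| \geq 2^{-(53+k+1)}$ for every breakpoint $\mu$, which in the worst case equals $2^{-113}$ (see Figure~\ref{fig-1}). Thus for all $n$ in range the significand of $x^n$ sits at distance at least $2^{-113}$ from the nearest breakpoint.

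The two estimates are then matched. Since $2^{-113}$ is the smallest separation attained over the whole range (at $n=51$), it serves as a uniform lower bound on the distance from the significand of $x^n$ to any breakpoint; on the other side, $\alpha_{max}$ is increasing in $n$, so it suffices to verify $2\alpha_{max} \leq 2^{-113}$ at the top of the range, which Table~\ref{table-logpower-extended} confirms with room to spare for every $n \leq \nmax$. As the significand of $h+l$ departs from that of $x^n$ by strictly less than the distance from $x^n$ to any breakpoint, no breakpoint can lie between them, whence $\RNd(h+l) = \RNd(x^n)$ and rounding $h+l$ into double precision returns the correctly rounded $x^n$. The genuinely hard part is not this bookkeeping but the input it relies on: the guaranteed $2^{-113}$ separation is an exhaustive, non-analytic fact produced by Lef\`evre's hardest-to-round-case algorithm, and it is that computation, rather than the error analysis, that does the real work here.
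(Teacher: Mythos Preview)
Your proposal is correct and follows essentially the same approach as the paper: combine the double-extended $\alpha_{max}$ bound from Table~\ref{table-logpower-extended} with the $2^{-113}$ worst-case breakpoint separation from Lef\`evre's search (Table~\ref{worst-cases-double}), and observe that $2\alpha_{max} \leq 2^{-113}$ throughout the range $3 \leq n \leq \nmax$, so no breakpoint can fall between $x^n$ and $h+l$. Your added remarks on the monotonicity of $\alpha_{max}$ and on where the real difficulty lies (the exhaustive search, not the bookkeeping) are accurate and in the spirit of the paper's own presentation.
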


\noindent
Now, two important remarks:
\begin{itemize}
\item We do not have the worst cases for $n > \nmax$, but from probabilistic
  arguments we strongly believe that the lengths of the largest chains of
  consecutive bits after the rounding bit will be of the same order of
  magnitude (i.e., around $50$) for some range of $n$ above $\nmax$.
  However, it is unlikely that we will be able to show correct rounding in
  double precision for values of $n$ larger than $1000$.

\item On an Intel Itanium processor, it is possible to directly add two
  double-extended precision numbers and round the result to
  double precision without a ``double rounding'' (i.e., without having an
  intermediate sum rounded to double-extended precision).  Hence
  Theorem~\ref{th-dbleextended-correct-rounding} can directly be used.
  It is worth being noticed that the draft revised standard IEEE 754-R
  (see \url{http://754r.ucbtest.org/}) includes the fma as well as rounding to any specific destination
format, independent of operand formats.

\end{itemize}

\section*{Conclusion}

It has been shown that the function $x^n$ can be calculated in time ${\cal
  O}(\log n)$ with correct rounding in double precision, employing
double-extended precision arithmetic, at least for the range $3 \leq n \leq
\nmax$. A fused multiply accumulate (fma) instruction is assumed available
for algorithm efficiency reasons; and to keep the analysis simple, it was
assumed that the input as well as the output are not subnormal numbers and
are below the overflow threshold.

A simpler, ${\cal O}(n)$ time algorithm, faster than the above for small
values of $n$, was also analyzed. However, its error analysis turned out to
be more complicated (and less rigorous), and also to be less accurate than
the other.

\bibliographystyle{plain}

\end{document}